\documentclass[12pt]{amsart}
\usepackage{amsmath}
\usepackage{amscd}
\usepackage{amssymb}
\usepackage{amsfonts}

\setlength{\oddsidemargin}{0.in}
\setlength{\evensidemargin}{0.in}
\setlength{\textwidth}{6.46in}
\setlength{\textheight}{8.8in}

\newtheorem{theorem}{Theorem}[section]
\newtheorem{lemma}[theorem]{Lemma}

\theoremstyle{definition}

\theoremstyle{remark}
\newtheorem{remark}[theorem]{Remark}
\numberwithin{equation}{section}

\begin{document}

\title[Sharp upper diameter bounds for compact shrinkers]
{Sharp upper diameter bounds for compact shrinking Ricci solitons}
\author{Jia-Yong Wu}
\address{Department of Mathematics, Shanghai University, Shanghai 200444, China}
\email{wujiayong@shu.edu.cn}
\thanks{}
\subjclass[2010]{Primary 53C20; Secondary 53C25}
\dedicatory{}
\date{\today}

\keywords{shrinking Ricci soliton; diameter; logarithmic Sobolev inequality}

\begin{abstract}
We give a sharp upper diameter bound for a compact shrinking Ricci soliton in terms
of its scalar curvature integral and the Perelman's entropy functional. The sharp
cases could occur at round spheres. The proof mainly relies on a sharp logarithmic
Sobolev inequality of gradient shrinking Ricci solitons and a Vitali-type covering
argument.
\end{abstract}
\maketitle

\section{Introduction}\label{Int1}
A complete Riemannian metric $g$ on a smooth $n$-dimensional manifold $M$ is called a
\emph{Ricci soliton} if there exists a smooth vector field $V$ on $M$ such that
the Ricci curvature $\text{Ric}$ of the metric $g$ satisfies
\[
\mathrm{Ric}+\tfrac 12\mathcal{L}_Vg=\lambda g
\]
for some real constant $\lambda$, where $\mathcal{L}_V$ denotes the Lie derivative in
the direction of $V$. A Ricci soliton is called \emph{shrinking},
\emph{steady} or \emph{expanding}, if $\lambda>0$, $\lambda=0$ or $\lambda<0$,
respectively. When $V=\nabla f$ for some smooth function $f$ on $M$, then the
Ricci soliton becomes the \emph{gradient Ricci soliton}
\[
\mathrm{Ric}+\mathrm{Hess}\,f=\lambda g,
\]
where $\text{Hess}\,f$ denotes the Hessian of $f$. The function $f$ is often called a
\emph{potential function}. Perelman \cite{[Pe]} proved that every compact Ricci soliton
is necessarily gradient. For $\lambda>0$, scaling metric $g$, we can normalize
$\lambda=\frac 12$ so that
\begin{align}\label{Eq1}
\mathrm{Ric}+\mathrm{Hess}\,f=\frac 12g.
\end{align}
We set $\mathrm{Ric}_f:=\mathrm{Ric}+\mathrm{Hess}\,f$, which is customarily called
the Bakry-\'Emery Ricci tensor \cite{[BE]}. $\mathrm{Ric}_f$ is an important
geometric quantity, which can be used to show that the Ricci flow is a gradient
flow of the Perelman's $\mathcal{F}$-functional \cite{[Pe]}. In the whole paper, we
let a triple $(M, g, f)$ denote an $n$-dimensional complete gradient shrinking Ricci
soliton. As in \cite{[LLW]}, normalizing $f$ by adding a constant in \eqref{Eq1},
without loss of generality, we may assume \eqref{Eq1} simultaneously satisfies
\begin{equation}\label{Eq2}
\mathrm{R}+|\nabla f|^2=f \quad\mathrm{and}\quad (4\pi)^{-\frac n2}\int_Me^{-f} dv=e^{\mu},
\end{equation}
where $\mathrm{R}$ is the scalar curvature of $(M,g)$ and $\mu=\mu(g,1)$ is the entropy
functional of Perelman \cite{[Pe]}; see the explanation in Section \ref{sec2}.
Note that $\mu$ is a finite constant for a fixed complete gradient shrinking Ricci
soliton. From Lemma 2.5 in \cite{[LLW]}, we know that $e^{\mu}$ is almost equivalent
to the volume of the geodesic ball $B(p_0,1)$ with radius $1$ and center $p_0$.
That is,
\begin{equation}\label{equiv}
\frac{(4\pi)^{\frac n2}}{e^{2^{4n+7}}}\leq\frac{V(p_0,1)}{e^{\mu}}\leq (4\pi)^{\frac n2}e^n,
\end{equation}
where $V(p_0,1)$ denotes the volume of $B(p_0,1)$. Here $p_0\in M $ is a point where
$f$ attains its infimum, which always exists on the complete (compact or not-compact) gradient
shrinking Ricci soliton $(M,g,f)$; see \cite{[HaMu]}. By the Chen's argument \cite{[Chen]},
we know that $\mathrm{R}\ge0$. By the Pigola-Rimoldi-Setti work \cite{[PiRS]}, we further
know that $\mathrm{R}>0$ unless $(M,g,f)$ is the Euclidean Gaussian shrinking
Ricci soliton $(\mathbb{R}^n, g_E, \frac{|x|^2}{4})$.

\vspace{.1in}

Gradient shrinking Ricci solitons can be regarded as a natural extension of Einstein
manifolds. They play an important role in the Ricci flow as they correspond to some
self-similar solutions and often rise as singularity models of the Ricci flow \cite{[Ham]}.
They are also viewed as critical points of the Perelman's entropy functional \cite{[Pe]}.
At present, one of most important project is the classification of complete gradient
shrinking Ricci solitons. For dimension $2$, the classification is complete \cite{[Ha88]}.
In particular, Hamilton proved that every $2$-dimensional compact shrinking Ricci
soliton must be Einstein. For dimension $3$, Ivey \cite{[Ivey]} proved that any compact
shrinking Ricci solitons are still Einstein; the non-compact case is a little complicated
and has been completely classified by the work of \cite{[Pe]}, \cite {[NW]} and \cite{[CCZ]}.
However for the higher dimensions, even $n=4$, the classification remains open, though much
progress has been made; see, e.g., \cite{[CCq]}, \cite{[Cati]}, \cite{[ChWa]}, \cite{[FG1]},
\cite{[MS]}, \cite{[MuWa]}, \cite{[MuWa2]}, \cite{[NW]}, \cite{[PW]}, \cite{[WWW]} and \cite{[ZZH]}.

\vspace{.1in}

On the other hand, Ivey \cite{[Ivey]} confirmed that any compact gradient steady or expanding
Ricci solitons are Einstein. Therefore the shrinking cases are the only possible non-Einstein
compact Ricci solitons. In fact for $n=4$, Cao \cite{[Ca96]}, Koiso \cite{[Ko]}, Wang and Zhu
\cite{[WaZ]} successfully constructed non-Einstein examples of compact K\"ahler shrinking Ricci
solitons. At present, all of known compact shrinking Ricci solitons are K\"ahler. It remains
an interesting question whether there exists a non-K\"ahler compact shrinking Ricci soliton.
Derdzi\'nski \cite{[De]} proved that every compact shrinking Ricci soliton has a finite
fundamental group (the non-compact case due to Wylie \cite{[Wy]}). We refer to further
related work in \cite{[BW]}, \cite{[CaTr]}, \cite{[Cat]}, \cite{[Cao]} and references therein.

\vspace{.1in}

In this paper, we will study the diameter estimate for a compact (without boundary) gradient
shrinking Ricci soliton. We will give a sharp upper diameter bound in terms of the
$L^{\frac{n-1}{2}}$-norm of the scalar curvature and the Perelman's entropy functional.
On a compact shrinking Ricci soliton $(M,g,f)$, the diameter of $M$ is defined by
\[
\mathrm{diam}(M):=\max\left\{dist(p,q)|\,\,\,\forall\,p,\, q\in M\right\},
\]
where $dist(p,q)$ denotes the geodesic distance between points $p$ and $q$. Recently,
there has been lots of effort to estimate the diameter of gradient shrinking Ricci
solitons. In \cite{[FS]}, Futaki and Sano got a lower diameter bound for non-Einstein
compact shrinking Ricci solitons, which was then sharpened by Andrews and Ni \cite{[AN]},
and Futaki, Li and Li \cite{[FLL]}. In \cite{[FG]}, Fern\'andez-L\'opez and Garc\'ia-R\'io
studied some properties of geodesics on Ricci solitons and obtained many lower diameter
bounds for compact gradient solitons in terms of extremal values of the potential function,
the scalar curvature and the Ricci curvature on unit tangent vectors. Motivated by the
classical Myers' theorem, Fern\'andez-L\'opez and Garc\'ia-R\'io \cite{[FG08]} proved a
Myers' type theorem on Riemannian manifolds when $\mathrm{Ric}_f$ is bounded below by a
positive constant and $|\nabla f|$ is bounded. Later, Limoncu \cite{[Lim]} and Tadano
\cite{[Tad2]} respectively gave an explicit upper diameter bound for such manifolds,
which was sharpened by the author \cite{[Wu]}. When the bound of $|\nabla f|$ is
replaced by the bound of $f$, many upper diameter bounds were studied by Wei and Wylie
\cite{[WW]}, Limoncu \cite{[Lim2]} and Tadano \cite{[Tad1]}, etc. For more related
results, the interested reader can refer to \cite{[BQ]}, \cite{[FG2]}, \cite{[Lot]}
and the references therein.

\vspace{.1in}

In another direction, Bakry and Ledoux \cite{[BL]} applied a sharp Sobolev inequality
of manifolds to give an alternative proof of the Myers' diameter estimate, which indicates
that some functional inequalities of manifolds may suffice to produce an upper diameter
bound of manifolds. Similar idea also appeared in the other literatures. For example,
Topping \cite{[To3]} applied the Michael-Simon Sobolev inequality to obtain an upper
diameter bound for a closed connected manifold immersed in $\mathbb{R}^n$ in terms of
its mean curvature integral. Topping's result was later generalized by Zheng and the
author \cite{[WZ]} to a general ambient space.

\vspace{.1in}

The above method is also suitable to the Ricci flow setting. In \cite{[To2]}, Topping
applied the Perelman's $\mathcal{W}$-functional to get an upper diameter bound for a
compact manifold evolving under the Ricci flow. Here the upper bound depends on the
scalar curvature integral under the evolving metric and some geometric quantities
with the initial metric. Inspired by Topping's argument, Zhang \cite{[Zhq]} applied
the uniform Sobolev inequality along the Ricci flow to obtain an upper diameter
bound in terms of the scalar curvature integral, volume and Sobolev constants
(or positive Yamabe constants) under the Ricci flow. Meanwhile, he proved a sharp
lower bound for the diameters, which depends on the initial metric, time and the
scalar curvature integral. We would like to mention that Zhang's argument is also
suitable to stationary manifolds.

\vspace{.1in}

Inspired by the work of Topping \cite{[To2]} and Zhang  \cite{[Zhq]}, in this paper
we are able to prove a sharp upper diameter bound for a compact shrinking Ricci soliton
without any assumption. Our result gives an explicit coefficient of the diameter estimate
in terms of the scalar curvature integral and the Perelman's entropy functional.

\begin{theorem}\label{Main}
Let $(M,g, f)$ be an $n$-dimensional $(n\geq3)$ compact gradient shrinking Ricci soliton satisfying
\eqref{Eq1} and \eqref{Eq2}. Then there exists a constant $c_1(n,\mu)$ depending  on
$n$ and $\mu$ such that
\[
\mathrm{diam}(M)\le c_1(n,\mu)\int_M\mathrm{R}^{\frac{n-1}{2}}dv,
\]
where $\mathrm{R}$ is the scalar curvature of $(M,g,f)$ and $\mu=\mu(g,1)$
is the Perelman's entropy functional. In particular, we can take
\[
c_1(n,\mu)=4\max\left\{w^{-1}_n, (4\pi)^{-\frac n2}e^{2^n\cdot17-\mu-n}\right\},
\]
where $w_n$ is the volume of the unit $n$-dimensional ball in $\mathbb{R}^n$.
\end{theorem}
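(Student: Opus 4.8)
The plan is to reduce the global diameter estimate to a purely local statement about geodesic balls and then to glue these local statements along a minimizing geodesic by a Vitali covering. The engine is the sharp logarithmic Sobolev inequality attached to the soliton. Since $(M,g,f)$ satisfies \eqref{Eq1} together with the normalization \eqref{Eq2}, the potential $f$ is precisely a minimizer of Perelman's $\mathcal{W}$-functional at scale $\tau=1$, so that $\mathcal{W}(g,f,1)=\mu$. Writing $\phi^2=(4\pi)^{-n/2}e^{-f}$ and using the variational characterization $\mu=\inf\{\mathcal{W}(g,\phi,1):\int_M\phi^2\,dv=1\}$, I would first record the sharp inequality
\[
\int_M\phi^2\log\phi^2\,dv\le\int_M\bigl(4|\nabla\phi|^2+\mathrm{R}\,\phi^2\bigr)\,dv-\tfrac n2\log(4\pi)-n-\mu,
\]
valid for all $\phi$ with $\int_M\phi^2\,dv=1$ and saturated by the soliton itself. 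From this I would pass, via the standard heat-kernel (Gross--Davies) machinery, to a genuine Sobolev inequality
\[
\Bigl(\int_M|u|^{\frac{2n}{n-2}}\,dv\Bigr)^{\frac{n-2}{n}}\le A\int_M\bigl(4|\nabla u|^2+\mathrm{R}\,u^2\bigr)\,dv+B\int_M u^2\,dv,
\]
whose constants $A,B$ depend only on $n$ and $\mu$; the equivalence \eqref{equiv} is what guarantees that everything is controlled by the entropy alone.

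The heart of the argument is a local lemma: there is a constant $\kappa=\kappa(n,\mu)>0$ such that for every $x\in M$ one can choose a radius $\rho_x>0$ with $\int_{B(x,\rho_x)}\mathrm{R}^{(n-1)/2}\,dv\ge\kappa\,\rho_x$. To prove it I would convert the $L^2$-Sobolev inequality into an $L^1$ (isoperimetric) form in which $\sqrt{\mathrm{R}}$ plays the role of a mean curvature; applied to smoothed indicators of balls together with the coarea formula, this yields a Michael--Simon--Topping type differential inequality $V(x,r)^{(n-1)/n}\le C\bigl(V'(x,r)+\int_{B(x,r)}\sqrt{\mathrm{R}}\bigr)$ below a fixed scale $r_0(n,\mu)$. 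Were the curvature term never dominant, integrating this inequality would force Euclidean growth $V(x,r)\ge c_n r^n$ at all scales, contradicting compactness; hence at the critical radius where the volume ratio first drops to a fixed threshold the term $\int_{B}\sqrt{\mathrm{R}}$ must control $V^{(n-1)/n}$. I would then convert $\int_{B}\sqrt{\mathrm{R}}$ into $\int_{B}\mathrm{R}^{(n-1)/2}$ by H\"older's inequality, for which a matching upper (non-inflating) bound $V(x,\rho_x)\le C(n,\mu)\rho_x^n$ is indispensable; this I would extract from the same Sobolev inequality so that the resulting constant still depends only on $n$ and $\mu$.

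With the local lemma available the diameter estimate follows quickly. Let $\gamma$ be a unit-speed minimizing geodesic of length $\mathrm{diam}(M)$. The balls $\{B(x,\rho_x)\}_{x\in\gamma}$ cover $\gamma$, so by the Vitali covering lemma there is a disjoint subfamily $\{B(x_i,\rho_{x_i})\}$ whose dilates $\{B(x_i,5\rho_{x_i})\}$ still cover $\gamma$. Because $\gamma$ is minimizing, arclength equals distance, so the length of $\gamma$ inside each $B(x_i,5\rho_{x_i})$ is at most $10\rho_{x_i}$, and therefore
\[
\mathrm{diam}(M)\le\sum_i 10\,\rho_{x_i}\le\frac{10}{\kappa}\sum_i\int_{B(x_i,\rho_{x_i})}\mathrm{R}^{(n-1)/2}\,dv\le\frac{10}{\kappa}\int_M\mathrm{R}^{(n-1)/2}\,dv,
\]
the final step using disjointness of the selected balls. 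Tracking the constants through the log-Sobolev/Sobolev chain, through the isoperimetric constant (which is responsible for the $w_n^{-1}$ term and is extremal on round spheres, where small balls are asymptotically Euclidean), and through the entropy-dependent factor coming from \eqref{equiv}, would produce the explicit value of $c_1(n,\mu)$.

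The genuine obstacle is the local lemma: extracting from a \emph{single} functional inequality a two-sided control of ball volumes with constants depending only on $n$ and $\mu$. The lower estimate (curvature must eventually dominate) and the non-inflating upper estimate must be calibrated so that the H\"older passage from $\sqrt{\mathrm{R}}$ to $\mathrm{R}^{(n-1)/2}$ produces exactly the exponent appearing in the theorem while losing no power of the radius; any mismatch there would either break the scale-invariance that makes the covering sum telescope or spoil the sharp constant.
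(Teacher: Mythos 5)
Your global architecture --- a local lemma of the form $\rho_x\le C(n,\mu)\int_{B(x,\rho_x)}\mathrm{R}^{(n-1)/2}dv$ glued along a minimizing geodesic by a Vitali covering --- is exactly the paper's endgame (compare \eqref{intine} and \eqref{precisein}), but the route you propose to the local lemma contains three steps that fail as stated. First, the single $\tau=1$ logarithmic Sobolev inequality that you derive from the variational characterization of $\mu$ cannot yield a Sobolev inequality via the Gross--Davies machinery: ultracontractivity, hence Sobolev, requires the log-Sobolev inequality at \emph{all} scales $\tau$, i.e.\ precisely the family \eqref{LSI} with the $-\frac n2\ln\tau$ term. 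For shrinkers this family is the nontrivial theorem of Li--Li--Wang and Li--Wang (the paper's Lemma \ref{logsi}); it does not follow from the $\tau=1$ case by rescaling, because rescaling $g$ destroys the soliton normalization \eqref{Eq1}--\eqref{Eq2}, so your write-up silently assumes the hardest input. Second, your conversion of the $L^2$-Sobolev inequality ``into an $L^1$ (isoperimetric) form'' runs in the wrong direction: Federer--Fleming/Cauchy--Schwarz gives $L^1\Rightarrow L^2$, not conversely, and concretely, plugging smoothed indicators of balls into the $L^2$ inequality makes $\int|\nabla u|^2$ blow up like the inverse smoothing width, so no Michael--Simon-type perimeter inequality with $\sqrt{\mathrm{R}}$ as mean curvature can be extracted this way. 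Third, the ``non-inflating'' bound $V(x,\rho_x)\le C(n,\mu)\rho_x^n$ cannot come from a Sobolev inequality at all: Sobolev inequalities give non-collapsing (Faber--Krahn lower volume bounds), never upper bounds --- hyperbolic space satisfies the sharp Euclidean Sobolev inequality yet has exponential volume growth.

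The good news is that the obstacle you single out as ``genuine'' is illusory, and the repairs lead essentially to the paper's argument. In the H\"older passage the residual power of volume is \emph{positive} (in your normalization one gets $\bigl(\int_B \mathrm{R}^{(n-1)/2}\bigr)^{1/(n-1)}\gtrsim V^{1/(n(n-1))}$, since $\frac{n-1}{n}-\frac{n-2}{n-1}=\frac{1}{n(n-1)}$), so only a \emph{lower} volume bound is needed, which is free below the first radius where the ratio $\kappa(x,r)$ drops to the threshold; the paper sidesteps even this by building the weight $[V(p,s)]^{-(n-3)/2}$ into the maximal function $M\mathrm{R}$, so that H\"older cancels the volume identically in \eqref{defMR}. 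More efficiently, the paper never passes through a Sobolev or isoperimetric inequality: it plugs the Lipschitz cutoff $\psi(d(p,\cdot)/r)$ directly into \eqref{LSI} with $\tau=r^2$ (so gradient terms cost only $r^{-2}V(p,r)/V(p,r/2)$, avoiding the indicator limit), obtaining the functional inequality \eqref{LSIequ}, and replaces your ODE dichotomy by the dyadic descent of Theorem \ref{maxirati}: if $M\mathrm{R}(p,r)\le\delta$ and $\kappa(p,s)\le\delta$ then $\kappa(p,s/2)\le\delta$, contradicting $\kappa(p,s)\to w_n$ as $s\to0$. Choosing $r_0$ with $V(M)<\delta r_0^n$ then forces $M\mathrm{R}(p,r_0)>\delta$ at every $p$, which is your local lemma with explicit $\delta=\min\{w_n,(4\pi)^{n/2}e^{\mu+n-2^n\cdot 17}\}$; your Vitali step is sound, though the paper uses Topping's covering lemma with fraction $\rho\nearrow\frac12$ to get the constant $4\delta^{-1}$ rather than your $10/\kappa$ from the $5r$-covering.
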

\begin{remark}
The theorem is also suitable to positive Einstein manifolds. The exponent $\frac{n-1}{2}$
of the scalar curvature is sharp. Indeed, we consider the round $n$-sphere
$\mathbb{S}^n(r)$ of radius $r$ with the canonical metrics $g_0$ and let
$f=\mathrm{constant}$. Then $(\mathbb{S}^n(r), g_0, f)$ is a trivial compact
gradient shrinking Ricci soliton. Its diameter is almost equivalent to $r$,
i.e., $\mathrm{diam}_{g_0}(M)\approx r$; while the scalar curvature
$\mathrm{R}(g_0)\approx r^{-2}$. If we scale metric $g_0$ to be $g\approx r^{-2}g_0$
such that $\mathrm{Ric}(g)=\frac 12 g$, then by \eqref{equiv},
\[
e^{\mu(g,1)}\approx V_g(p,1)=\frac{V_{g_0}(p,r)}{r^n}=c(n),
\]
where $V_g(p,1)$ denotes the volume of ball $B(p,1)$ with respect to metric $g$. This
indicates that coefficient $c_1(n,\mu)$ only depends on $n$ and the right hand side of the
diameter estimate in the theorem can be easily computed to be $c(n)r$.
\end{remark}

\begin{remark}
We omit the discussion about the optimal choice of $c_1(n,\mu)$. One might get a sharper
constant $c_1(n,\mu)$ by choosing a better cut-off function in Section \ref{sec3}.
\end{remark}

We would like to point out that previous diameter estimates for gradient shrinking Ricci
solitons mainly relies on pointwise conditions of geometric quantities; see, e.g.,
\cite{[Lim],[Lim2]}, \cite{[Tad1], [Tad2]}, \cite{[WW]} and \cite{[Wu]}. Our estimate is
valid in the integral sense and it seems to be weaker than before. In \cite{[MuWa]},
Munteanu and Wang proved an upper diameter bound for a compact shrinking Ricci soliton
in terms of its injectivity radius. Our estimate depends on the scalar curvature integral
and the Perelman's entropy functional, and it may be a more feasible dependence on
geometric quantities.

\vspace{.1in}

The trick of proving Theorem \ref{Main} stems from \cite{[To2]}, but we need to carefully
examine the explicit coefficient of the diameter bound in terms of the scalar curvature
integral. Our argument is divided into three steps. First, we apply a sharp logarithmic
Sobolev inequality and a proper cut-off function to get a new functional inequality, which
is related to the maximal function of scalar curvature and the volume ratio (see Theorem
\ref{logeq}). We mention that the sharp logarithmic Sobolev inequality is a key inequality
in our paper, which was proved by Li, Li and Wang \cite{[LLW]} for compact Ricci solitons
and then extended by Li and Wang \cite{[LiWa]} to the non-compact case. Second, we use the
functional inequality to give an alternative theorem, which states that the maximal function
of scalar curvature and the volume ratio cannot be simultaneously smaller than a fixed
constant on a geodesic ball of shrinking Ricci soliton (see Theorem \ref{maxirati}).
Third, we apply the alternative theorem and a Vitali-type covering lemma to give the
diameter estimate.

\vspace{.1in}

The structure of this paper is as follows. In Section \ref{sec2}, we recall some basic results
about gradient shrinking Ricci solitons. In particular, we rewrite the Li-Wang's logarithmic
Sobolev inequality \cite{[LiWa]} as a functional inequality by choosing a proper cut-off function.
In Section \ref{sec3}, we use the functional inequality to give an alternative theorem. In Section
\ref{sec4}, we apply the alternative theorem to prove Theorem \ref{Main}.

\vspace{.1in}

\textbf{Acknowledgement}.
The author thanks Peng Wu for helpful discussions. The author also thanks the referee for valuable
comments and suggestions, which helped to improve the paper. This work is supported by the
NSFC (11671141) and the Natural Science Foundation of Shanghai (17ZR1412800).


\section{Background}\label{sec2}
In this section, we recall some basic results about gradient shrinking Ricci solitons
and give an explanation why \eqref{Eq2} can be suitable to \eqref{Eq1}. We also rewrite
the Li-Wang's logarithmic Sobolev inequality \cite{[LiWa]} to a new functional inequality
relating the maximal function of scalar curvature and the volume ratio. For more properties
about Ricci solitons, the interested reader refer to the survey \cite{[Cao]}.

\vspace{.1in}

In this paper, we concentrate on compact shrinking Ricci solitons, however the following
results are also suitable to the non-compact case. By Hamilton \cite{[Ham]},
\eqref{Eq1} gives that
\[
\mathrm{R}+\Delta f=\frac n2,\quad 2\mathrm{Ric}(\nabla f)=\nabla\mathrm{R}
\]
and
\[
\nabla(\mathrm{R}+|\nabla f|^2-f)=0.
\]
Adding $f$ by a constant if necessary, we have that
\begin{equation}\label{condition}
\mathrm{R}+|\nabla f|^2=f.
\end{equation}
Combining the above equalities gives
\begin{equation}\label{identitycond}
2\Delta f-|\nabla f|^2+\mathrm{R}+f-n=0.
\end{equation}

By Cao-Zhou \cite{[CaZh]} and Haslhofer-M\"uller \cite{[HaMu]}, we have a precise asymptotic
estimate of $f$.
\begin{lemma}\label{potenesti}
Let $(M,g, f)$ be an $n$-dimensional complete non-compact gradient shrinking Ricci soliton
satisfying \eqref{Eq1} and \eqref{condition}. Then there exists a point $p_0\in M $ where
$f$ attains its infimum (may be not unique). Moreover, $f$ satisfies
\[
\frac 14\left[\left(r(x,p_0)-5n\right)_{+}\right]^2\le f(x)\le\frac 14\left(r(x,p_0)+\sqrt{2n}\right)^2,
\]
where $r(x,p_0)$ is a distance function from $p_0$
to $x$, and $a_+=\max\{a,0\}$ for $a\in \mathbb{R}$.
\end{lemma}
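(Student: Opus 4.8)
The plan is to reduce everything to two elementary consequences of the soliton structure and then integrate them along minimizing geodesics issuing from the minimum point. Tracing \eqref{Eq1} gives $\mathrm{R}+\Delta f=\tfrac n2$, and since $\mathrm{R}\ge0$ by Chen's result \cite{[Chen]}, the normalization \eqref{condition} yields $|\nabla f|^2=f-\mathrm{R}\le f$; in particular $f\ge0$ everywhere. The existence of a point $p_0$ realizing $\inf_M f$ is the statement attributed to \cite{[HaMu]} (one verifies that $f$ is bounded below and proper, so its infimum is attained). At such a point $\nabla f(p_0)=0$ and $\Delta f(p_0)\ge0$, so the traced identity forces $\mathrm{R}(p_0)\le\tfrac n2$, and then \eqref{condition} gives $f(p_0)=\mathrm{R}(p_0)\le\tfrac n2$. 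This fixes the value of $f$ at the base point, which is the common input for both bounds.

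For the upper bound I would integrate the gradient inequality $|\nabla\sqrt f|\le\tfrac12$, which holds wherever $f>0$ because $|\nabla f|^2\le f$. Fix $x\in M$ and a unit-speed minimizing geodesic $\gamma\colon[0,r_0]\to M$ from $p_0$ to $x$, with $r_0=r(x,p_0)$. Then $s\mapsto\sqrt{f(\gamma(s))}$ is $\tfrac12$-Lipschitz, so
\[
\sqrt{f(x)}\le\sqrt{f(p_0)}+\tfrac12 r_0\le\sqrt{\tfrac n2}+\tfrac12 r_0.
\]
Squaring gives $f(x)\le\tfrac14\big(r_0+\sqrt{2n}\big)^2$, the stated upper estimate.

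The lower bound is the genuine obstacle. Along the same geodesic, since $\gamma$ is a geodesic, $\tfrac{d^2}{ds^2}f(\gamma(s))=\mathrm{Hess}\,f(\gamma',\gamma')=\tfrac12-\mathrm{Ric}(\gamma',\gamma')$ by \eqref{Eq1}. Integrating once and using $\nabla f(p_0)=0$ gives
\[
\langle\nabla f(x),\gamma'(r_0)\rangle=\tfrac12 r_0-\int_0^{r_0}\mathrm{Ric}(\gamma',\gamma')\,ds,
\]
and the left-hand side is at most $|\nabla f(x)|\le\sqrt{f(x)}$. Thus everything reduces to an \emph{upper} bound for $\int_0^{r_0}\mathrm{Ric}(\gamma',\gamma')\,ds$. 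Here I would invoke the second variation of arc length: since $\gamma$ is minimizing, the index form is nonnegative, so for parallel orthonormal fields $E_1,\dots,E_{n-1}\perp\gamma'$ and any Lipschitz cutoff $\psi$ vanishing at the endpoints, summing $\int_0^{r_0}\big((\psi')^2-\psi^2\langle R(E_i,\gamma')\gamma',E_i\rangle\big)\,ds\ge0$ over $i$ yields $\int_0^{r_0}\psi^2\,\mathrm{Ric}(\gamma',\gamma')\,ds\le(n-1)\int_0^{r_0}(\psi')^2\,ds$. Choosing $\psi\equiv1$ on $[1,r_0-1]$ and linear on the two unit end-intervals controls the bulk of the integral by $2(n-1)$, while the leftover weight $(1-\psi^2)$ on the ends is estimated directly by writing $\mathrm{Ric}=\tfrac12 g-\mathrm{Hess}\,f$, integrating by parts, and using the already-proved pointwise bound on $f$ (hence on $|\nabla f|$) together with $f(p_0)\le\tfrac n2$. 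Bookkeeping these contributions gives $\int_0^{r_0}\mathrm{Ric}(\gamma',\gamma')\,ds\le\tfrac{5n}{2}$, whence $\sqrt{f(x)}\ge\tfrac12(r_0-5n)$; squaring (the claim being vacuous once $r_0\le5n$, since $f\ge0$) produces $f(x)\ge\tfrac14\big[(r_0-5n)_+\big]^2$. The delicate point is exactly this last constant-chasing on the end-intervals: the precise cutoff profile and the treatment of the possibly negative Ricci terms there are what decide whether one recovers the clean constant $5n$ rather than a cruder bound.
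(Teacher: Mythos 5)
The paper does not actually prove this lemma---it quotes it from Cao--Zhou \cite{[CaZh]} and Haslhofer--M\"uller \cite{[HaMu]}---so your attempt should be measured against that cited proof, which it follows in outline. Your upper-bound half is complete and is exactly the standard argument: $\mathrm{R}\ge0$ gives $|\nabla f|^2\le f$, hence $\sqrt f$ is $\tfrac12$-Lipschitz, and at the minimum $\nabla f(p_0)=0$, $\Delta f(p_0)\ge0$ force $f(p_0)=\mathrm{R}(p_0)\le\tfrac n2$. (One small circularity: you justify the existence of $p_0$ by properness of $f$, but properness \emph{is} the lower bound being proved; the standard fix is to first run the lower-bound argument from an arbitrary basepoint $q$, replacing $\nabla f(p_0)=0$ by $|\nabla f(q)|\le\sqrt{f(q)}$, deduce quadratic growth and hence properness, and only then pass to $p_0$.)

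The genuine gap is in the lower bound, precisely at the point you flag as ``constant-chasing.'' Your argument pivots on the intermediate claim $\int_0^{r_0}\mathrm{Ric}(\gamma',\gamma')\,ds\le\tfrac{5n}{2}$, and the tools you invoke cannot deliver it: on the far interval $[r_0-1,r_0]$, integrating $\int(1-\psi^2)\bigl(\tfrac12-\ddot f\bigr)ds$ by parts (writing $\dot f(s)=\langle\nabla f,\gamma'\rangle$) produces the boundary term $-\langle\nabla f(x),\gamma'(r_0)\rangle$ together with $\int_{r_0-1}^{r_0}2(r_0-s)\dot f\,ds$, and the ``already-proved pointwise bound on $f$'' only gives $|\dot f|\le\sqrt f\le\tfrac12\bigl(r_0+\sqrt{2n}\bigr)$ there, so both terms are of size $O(r_0)$, not $O(n)$; a uniform constant bound on the full Ricci integral does not follow (and the Cao--Zhou/Haslhofer--M\"uller proof never establishes one). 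The actual mechanism is different: keep the boundary term, and observe that it cancels \emph{identically} against the same quantity $\langle\nabla f(x),\gamma'(r_0)\rangle=\dot f(0)+\tfrac{r_0}{2}-\int_0^{r_0}\mathrm{Ric}(\gamma',\gamma')\,ds$ coming from integrating the soliton equation, leaving an inequality of the form
\[
\int_{r_0-1}^{r_0}2(r_0-s)\,\dot f(s)\,ds\ \ge\ \frac{r_0}{2}-2(n-1)-\Bigl(\tfrac23+\sqrt{\tfrac n2}+\tfrac12\Bigr),
\]
and then bound the left-hand side above by $\sqrt{f(x)}+\tfrac12$ using the Lipschitz property of $\sqrt f$ anchored at the \emph{far} endpoint, $\sqrt{f(\gamma(s))}\le\sqrt{f(x)}+\tfrac{r_0-s}{2}$ --- a step entirely absent from your sketch, where every bound is anchored at $p_0$. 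This places $\sqrt{f(x)}$ itself on the left of the inequality, yielding $\sqrt{f(x)}\ge\tfrac12(r_0-5n)$ directly, with no constant bound on $\int\mathrm{Ric}$ ever needed. As written, your far-end estimate would fail quantitatively; the cancellation trick plus the backward Lipschitz bound is the missing idea.
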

\begin{remark}\label{potenest}
In view of the flat Euclidean space $(\mathbb{R}^n,\delta_{ij})$ with
$f=|x|^2/4$, the above leading term $\frac 14 r^2(x,p_0)$ is optimal.
\end{remark}

For an $n$-dimensional complete Riemannian manifold $(M,g)$, the definition of the
Perelman's $\mathcal{W}$-entropy functional \cite{[Pe]} is
\[
\mathcal{W}(g,\varphi,\tau)
:=\int_M\left[\tau\Big(|\nabla \varphi|^2+\mathrm{R}\Big)+\varphi-n\right](4\pi\tau)^{-n/2}e^{-\varphi}dv
\]
for some $\varphi\in C^\infty(M)$ and $\tau>0$, provided this functional
is finite. The Perelman's $\mu$-entropy functional \cite{[Pe]} is defined by
\[
\mu(g,\tau):=\inf\left\{\mathcal{W}(g,\varphi,\tau)\Big|\varphi\in C^\infty(M)\,\,\,\mathrm{with}\,\,\, \int_M(4\pi\tau)^{-n/2}e^{-\varphi}dv=1\right\}.
\]
In general, the minimizer of $\mu(g,\tau)$ may not exist on non-compact manifolds.
However, by Lemma \ref{potenesti}, the above definitions are both well defined on
non-compact gradient shrinking Ricci solitons and many integrations by parts still
hold; see the explanation in \cite{[HaMu]}. Moreover, Carrillo and Ni \cite{[CaNi]}
proved that potential function $f$ is always a minimizer of $\mu(g,1)$, up to adding
$f$ by a constant. That is, for a constant $c$ with
\[
\int_M(4\pi)^{-n/2}e^{-(f+c)}dv=1,
\]
we have
\begin{align*}
\mu(g,1)&=\mathcal{W}(g,f+c,1)\\
&=\int_M\Big(|\nabla f|^2+\mathrm{R}+(f+c)-n\Big)(4\pi)^{-n/2}e^{-(f+c)}dv\\
&=\int_M\Big(2\Delta f-|\nabla f|^2+\mathrm{R}+(f+c)-n\Big)(4\pi)^{-n/2}e^{-(f+c)}dv\\
&=c.
\end{align*}
Here we used the integration by parts in the above third line because $f$ is
uniformly equivalent to the distance function squared and it guarantees
the integration by parts on non-compact manifolds; see \cite{[HaMu]}. We also
used \eqref{identitycond} in the above last line. Therefore we can assume that
\eqref{Eq1} satisfies \eqref{Eq2} in the introduction.

\vspace{.1in}

Carrillo and Ni \cite{[CaNi]} proved that $\mu(g,1)$ is the optimal logarithmic Sobolev
constant on complete shrinking Ricci soliton $(M,g,f)$ for scale one. Later, Li, Li and Wang
\cite{[LLW]} showed that $\mu(g,1)$ is in fact the optimal logarithmic Sobolev constant
on compact shrinking Ricci soliton $(M,g,f)$ for all scales and $\mu(g,\tau)$ is a continuous
function on $(0,\infty)$. Shortly after, the same conclusion for the non-compact case was
confirmed by Li and Wang \cite{[LiWa]}. In summary, we have the following sharp logarithmic
Sobolev inequality on complete gradient shrinking Ricci solitons for all scales without any
curvature assumption.
\begin{lemma}\label{logsi}
Let $(M,g, f)$ be an $n$-dimensional complete gradient shrinking Ricci soliton satisfying \eqref{Eq1}
and \eqref{Eq2}. For any compactly supported locally Lipschitz function $\varphi$ in $M$ with
\[
\int_M\varphi^2dv=1
\]
and any real number $\tau>0$,
\begin{equation}\label{LSI}
\mu+n+\frac n2\ln(4\pi)\le\tau\int_M\left(4|\nabla\varphi|^2+\mathrm{R}\varphi^2\right)dv-\int_M\varphi^2\ln \varphi^2dv-\frac n2\ln \tau,
\end{equation}
where $\mathrm{R}$ is the scalar curvature of $(M,g,f)$ and $\mu=\mu(g,1)$ is the Perelman's entropy functional.
\end{lemma}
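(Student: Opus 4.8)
The plan is to derive \eqref{LSI} directly from the variational characterization of the Perelman entropy together with the scale statement that $\tau=1$ minimizes $\tau\mapsto\mu(g,\tau)$, which is precisely the content of the cited work of Li, Li and Wang \cite{[LLW]} in the compact case and Li and Wang \cite{[LiWa]} in the non-compact case. Thus the substance of the lemma is not a new estimate but a change of variables converting the $\mathcal{W}$-functional into the stated logarithmic-Sobolev form; the analytic heart, namely that $\mu(g,\tau)\ge\mu(g,1)$ for every $\tau>0$, is imported as a black box from those references.

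First I would introduce the substitution $\varphi^2=(4\pi\tau)^{-n/2}e^{-\phi}$, that is $\phi=-\ln\varphi^2-\tfrac n2\ln(4\pi\tau)$, which matches the normalization $\int_M\varphi^2\,dv=1$ of \eqref{LSI} with the constraint $\int_M(4\pi\tau)^{-n/2}e^{-\phi}\,dv=1$ in the definition of $\mu(g,\tau)$. A direct computation gives $|\nabla\phi|^2=4|\nabla\varphi|^2/\varphi^2$, so that
\[
\mathcal{W}(g,\phi,\tau)=\tau\int_M\bigl(4|\nabla\varphi|^2+\mathrm{R}\varphi^2\bigr)dv-\int_M\varphi^2\ln\varphi^2\,dv-\tfrac n2\ln(4\pi\tau)-n.
\]
Rearranging this identity shows that the right-hand side of \eqref{LSI} equals $\mathcal{W}(g,\phi,\tau)+\tfrac n2\ln(4\pi)+n$, so \eqref{LSI} is equivalent to the clean statement $\mu(g,1)\le\mathcal{W}(g,\phi,\tau)$.

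Next I would close the argument by the definition of the entropy: since $\phi$ satisfies $\int_M(4\pi\tau)^{-n/2}e^{-\phi}\,dv=1$, the infimum definition gives $\mathcal{W}(g,\phi,\tau)\ge\mu(g,\tau)$, and then the cited scale result $\mu(g,\tau)\ge\mu(g,1)$ yields $\mathcal{W}(g,\phi,\tau)\ge\mu(g,1)$, which is exactly what is needed. On a compact soliton all the integrations are automatic; for the non-compact case the asymptotics of $f$ recorded in Lemma \ref{potenesti}, already invoked for \eqref{Eq2}, justify finiteness of the entropy and the integrations by parts.

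The point requiring the most care, and the only genuine obstacle, is the passage between the two classes of test functions. The definition of $\mu(g,\tau)$ quantifies over smooth $\phi$, whereas \eqref{LSI} is asserted for compactly supported locally Lipschitz $\varphi$, and the substitution degenerates precisely where $\varphi$ vanishes. I would handle this by a standard approximation: first establish the inequality for strictly positive smooth $\varphi$, then replace a general compactly supported Lipschitz $\varphi$ by $\varphi_\varepsilon=(\varphi^2+\varepsilon)^{1/2}$, renormalize in $L^2$, and let $\varepsilon\to0$, using dominated convergence on the gradient and entropy terms together with the continuity of $\mu(g,\tau)$ in $\tau$ from \cite{[LLW]}. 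This density step, rather than any curvature estimate, is where the work lies; the geometric input is entirely encapsulated in the cited optimality of $\mu(g,1)$ across all scales.
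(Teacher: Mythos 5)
The paper gives no independent proof of Lemma \ref{logsi}: it is imported verbatim from Li--Li--Wang \cite{[LLW]} (compact case) and Li--Wang \cite{[LiWa]} (non-compact case), whose content is exactly the scale inequality $\mu(g,\tau)\ge\mu(g,1)$ that you treat as a black box, so your change of variables $\varphi^2=(4\pi\tau)^{-n/2}e^{-\phi}$, the resulting identity $\mathcal{W}(g,\phi,\tau)=\tau\int_M(4|\nabla\varphi|^2+\mathrm{R}\varphi^2)\,dv-\int_M\varphi^2\ln\varphi^2\,dv-\tfrac n2\ln(4\pi\tau)-n$, and the chain $\mathcal{W}(g,\phi,\tau)\ge\mu(g,\tau)\ge\mu(g,1)$ constitute a correct and faithful unpacking of precisely the citation the paper relies on --- essentially the same approach. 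One small repair to your density step: on a non-compact shrinker $(\varphi^2+\varepsilon)^{1/2}$ is not compactly supported and cannot be $L^2$-normalized (the volume is infinite), so there you should instead use $(\varphi^2+\varepsilon\chi^2)^{1/2}$ for a fixed smooth bump $\chi$ equal to $1$ on $\operatorname{supp}\varphi$, after which your argument goes through.
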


Lemma \ref{logsi} implies a functional inequality, which is closed linked with the maximal
function of scalar curvature and the volume ratio.
\begin{theorem}\label{logeq}
Let $(M,g, f)$ be an $n$-dimensional complete gradient shrinking Ricci soliton satisfying \eqref{Eq1}
and \eqref{Eq2}. For any point $p\in M$ and for any $r>0$,
\begin{equation}\label{LSIequ}
\mu+n+\frac n2\ln(4\pi)\le 16\frac{V(p,r)}{V\left(p,\frac r2\right)}
+\frac{r^2}{V\left(p,\frac r2\right)}\int_{B(p,r)}\mathrm{R}dv
+\ln \frac{V(p,r)}{r^n},
\end{equation}
where $\mathrm{R}$ is the scalar curvature of $(M,g,f)$ and $\mu=\mu(g,1)$
is the Perelman's entropy functional.
\end{theorem}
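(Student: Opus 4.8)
The plan is to feed a single explicit cut-off function into the sharp logarithmic Sobolev inequality of Lemma \ref{logsi} and make the scale-optimal choice $\tau=r^2$. Fix $p\in M$ and $r>0$, and write $\rho(x)=\mathrm{dist}(p,x)$. I would take the piecewise-linear radial cut-off
\[
\phi(x)=\begin{cases}
1, & \rho(x)\le r/2,\\
2\bigl(1-\rho(x)/r\bigr), & r/2\le \rho(x)\le r,\\
0, & \rho(x)\ge r,
\end{cases}
\]
which is compactly supported, locally Lipschitz, equals $1$ on $B(p,r/2)$, vanishes outside $B(p,r)$, and satisfies $\abs{\nabla\phi}\le 2/r$ almost everywhere with $\nabla\phi$ supported in the annulus $B(p,r)\setminus B(p,r/2)$. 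Setting $A:=\int_M\phi^2\,dv$ and $\varphi:=\phi/\sqrt{A}$ gives an admissible test function with $\int_M\varphi^2\,dv=1$; since $\phi=1$ on $B(p,r/2)$ while $0\le\phi\le1$ is supported in $B(p,r)$, I record the two-sided bound $V(p,r/2)\le A\le V(p,r)$, which is what converts all the $1/A$ factors into the volume ratio in the statement.

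With $\tau=r^2$, I would estimate the three pieces of the right-hand side of \eqref{LSI} separately. For the Dirichlet term, $4\int_M\abs{\nabla\varphi}^2\,dv=\tfrac{4}{A}\int_M\abs{\nabla\phi}^2\,dv\le\tfrac{16}{Ar^2}\bigl(V(p,r)-V(p,r/2)\bigr)$, so after multiplying by $\tau=r^2$ and using $A\ge V(p,r/2)$ this contributes at most $16\,V(p,r)/V(p,r/2)$. For the curvature term, $\mathrm{R}\ge0$ and $0\le\phi^2\le1$ give $\tau\int_M \mathrm{R}\varphi^2\,dv=\tfrac{r^2}{A}\int_M\mathrm{R}\phi^2\,dv\le\tfrac{r^2}{V(p,r/2)}\int_{B(p,r)}\mathrm{R}\,dv$. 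Finally the term $-\tfrac n2\ln\tau=-n\ln r$ will combine with the entropy term to produce the logarithmic factor $\ln\bigl(V(p,r)/r^n\bigr)$.

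The one step that needs a genuine idea rather than bookkeeping is the entropy term $-\int_M\varphi^2\ln\varphi^2\,dv$: expanding $\varphi^2=\phi^2/A$ reintroduces the nonnegative quantity $-\tfrac1A\int_M\phi^2\ln\phi^2\,dv$, which is not obviously controlled by the volume ratio, and the naive pointwise bound $-t\ln t\le 1/e$ would only yield an extra annular-volume error that spoils the sharp constant $16$. The clean way to dispose of it is to view $\varphi^2\,dv$ as a probability measure supported in $B(p,r)$ and apply Jensen's inequality to the concave function $\ln$:
\[
-\int_M\varphi^2\ln\varphi^2\,dv=\int_{\{\varphi\ne0\}}\varphi^2\ln\frac{1}{\varphi^2}\,dv\le\ln\int_{\{\varphi\ne0\}}\frac{1}{\varphi^2}\,\varphi^2\,dv=\ln\bigl(\mathrm{vol}\{\varphi\ne0\}\bigr)\le\ln V(p,r),
\]
that is, the Shannon entropy of a probability density is at most the logarithm of the volume of its support. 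Substituting the three bounds into \eqref{LSI} and collecting $\ln V(p,r)-n\ln r=\ln\bigl(V(p,r)/r^n\bigr)$ then yields \eqref{LSIequ} exactly. I expect this entropy estimate to be the crux of the argument; the remaining work is only the normalization and the elementary inequalities $V(p,r/2)\le A\le V(p,r)$.
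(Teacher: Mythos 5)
Your proposal is correct and follows essentially the same route as the paper: normalize a cut-off equal to $1$ on $B(p,r/2)$ and supported in $B(p,r)$ with $|\nabla\phi|\le 2/r$, bound the Dirichlet and curvature terms via $V(p,r/2)\le A\le V(p,r)$, control the entropy term by $\ln V(p,r)$ via Jensen, and set $\tau=r^2$. The only cosmetic differences are your explicit piecewise-linear (rather than smooth) cut-off, which Lemma \ref{logsi} permits since it only requires locally Lipschitz test functions, and your dual formulation of Jensen (concavity of $\ln$ against the probability measure $\varphi^2\,dv$, instead of the paper's concavity of $-t\ln t$ against normalized volume measure), which yields the identical bound.
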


\begin{proof}[Proof of Theorem \ref{logeq}]
Let $\psi:[0,\infty)\to[0,1]$ be a smooth cut-off function, which is supported in $[0,1]$
satisfying $\psi(t)=1$ on $[0,1/2]$ and $|\psi'|\leq 2$ on $[0,\infty)$. For any point
$p\in M$, we also let
\[
\varphi(x):=e^{-\frac c2}\psi\left(\frac{d(p,x)}{r}\right),
\]
where $c$ is some constant determined by the constraint condition $\int_M\varphi^2dv=1$. Obviously,
constant $c$ satisfies
\[
V\left(p,\frac r2\right)\le e^c\int_M\varphi^2dv=e^c
\]
and
\[
e^c=e^c\int_M\varphi^2dv=\int_M\psi^2(d(p,x)/r)dv\leq V(p,r).
\]
That is, $c$ satisfies
\[
V\left(p,\frac r2\right)\leq e^c\leq V(p,r).
\]
In the following, we will apply the above cut-off function to simplify
the sharp logarithmic Sobolev inequality in Lemma \ref{logsi}. Notice that $\varphi$
satisfies
\[
|\nabla \varphi|\leq\frac 2r\cdot e^{-\frac c2}
\]
and it is supported in $B(p,r)$.
For the first term of the right hand side of \eqref{LSI}, we estimate that
\begin{equation}\label{est1}
\begin{aligned}
4\tau\int_M |\nabla\varphi|^2dv&=4\tau\int_{B(p,r)\backslash B(p,\frac r2)} |\nabla\varphi|^2dv\\
&\le4\tau V(p,r)\frac{4}{r^2}e^{-c}\\
&\le\frac{16\tau}{r^2}\cdot\frac{V(p,r)}{V\left(p,\frac r2\right)}.
\end{aligned}
\end{equation}
For the second term of the right hand side of \eqref{LSI}, we have
\begin{equation}\label{est2}
\begin{aligned}
\tau\int_M\mathrm{R}\varphi^2 dv&\le \tau e^{-c}\int_{B(p,r)}\mathrm{R}dv\\
&\le\frac{\tau}{V\left(p,\frac r2\right)}\int_{B(p,r)}\mathrm{R}dv.
\end{aligned}
\end{equation}
Then we estimate the third term of the right hand side of \eqref{LSI}. Notice that continuous
function $H(t):=-t\ln t$ is concave with respect to $t>0$ and the Riemannian measure $dv$ is
supported in $B(p,r)$. Using the Jensen's inequality
\[
\frac{\int H(\varphi^2)dv}{\int dv}\leq H\left(\frac{\int \varphi^2 dv}{\int dv}\right)
\]
and the definition of $H$, we have that
\[
-\frac{\int_{B(p,r)}\varphi^2\ln\varphi^2dv}{\int_{B(p,r)}dv}
\leq-\frac{\int_{B(p,r)}\varphi^2dv}{\int_{B(p,r)}dv}\ln\left(\frac{\int_{B(p,r)}\varphi^2dv}{\int_{B(p,r)}dv}\right).
\]
Since $\int_{B(p,r)}\varphi^2dv=1$, the above estimate becomes
\[
-\int_{B(p,r)}\varphi^2\ln\varphi^2dv\leq\ln V(p,r).
\]
By the definition of $\varphi(x)$, we therefore get
\begin{equation}\label{est3}
-\int_M\varphi^2\ln\varphi^2dv=-\int_{B(p,r)}\varphi^2\ln\varphi^2dv
\le\ln V(p,r).
\end{equation}
Substituting \eqref{est1}, \eqref{est2} and \eqref{est3} into \eqref{LSI} gives
\[
\mu+n+\frac n2\ln(4\pi)\le\frac{16\tau}{r^2}\cdot\frac{V(p,r)}{V\left(p,\frac r2\right)}
+\frac{\tau}{V\left(p,\frac r2\right)}\int_{B(p,r)}\mathrm{R}dv
+\ln \frac{V(p,r)}{\tau^{\frac n2}}
\]
for any $\tau>0$. The conclusion follows by letting $\tau=r^2$.
\end{proof}

\section{Maximal function and volume ratio}\label{sec3}
In this section, we will apply Theorem \ref{logeq} to obtain an alternative theorem
about the lower bound for the maximal function of scalar curvature and the volume ratio
in the gradient shrinking Ricci soliton.

\vspace{.1in}

Following Topping's argument, given a Riemannian manifold $(M,g)$, for any point
$p\in M$ and $r>0$, we introduce the \emph{maximal function}
\[
M h(p,r):=\sup_{s\in(0,r]}s^{-1}\left[V(p,s)\right]^{-\frac{n-3}{2}}\left(\int_{B(p,s)} |h|dv\right)^{\frac{n-1}{2}}
\]
for any smooth function $h$ on $(M,g)$, and the \emph{volume ratio}
\[
\kappa(p,r):=\frac{V(p,r)}{r^n}.
\]

Now we give an alternative theorem. It says that the maximal function of
scalar curvature and the volume ratio in gradient shrinking Ricci solitons cannot be
simultaneously smaller than a fixed constant.
\begin{theorem}\label{maxirati}
Let $(M,g, f)$ be an $n$-dimensional complete gradient shrinking Ricci soliton satisfying \eqref{Eq1}
and \eqref{Eq2}. Then there exits a constant $\delta>0$ depending only on $n$ and $\mu$ such that for
any point $p\in M$ and for any $r>0$, at least one of the following is true:
\begin{enumerate}
 \item
$M \mathrm{R}(p,r)>\delta$;

 \item
 $\kappa(p,r)>\delta$.
\end{enumerate}
Here $\mathrm{R}(p,r)$ denotes the scalar curvature in the geodesic ball $B(p,r)$.
In particular, we can take
\[
\delta=\min\left\{w_n,\,(4\pi)^{\frac n2}e^{\mu+n-2^n\cdot17}\right\},
\]
where $\mu=\mu(g,1)$ is the Perelman's entropy functional and $w_n$ is the volume of
the unit $n$-dimensional ball in $\mathbb{R}^n$.
\end{theorem}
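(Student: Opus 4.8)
The plan is to argue by contradiction. Suppose that at some $p\in M$ and some $r>0$ both alternatives fail, i.e.
\[
M\mathrm{R}(p,r)\le\delta \quad\text{and}\quad \kappa(p,r)\le\delta .
\]
The strategy is to feed a carefully chosen radius $s\in(0,r]$ into the functional inequality \eqref{LSIequ} of Theorem \ref{logeq} and show that it is violated. Everything reduces to estimating the three terms on the right-hand side of \eqref{LSIequ} in terms of $\delta$, so the only real work is choosing $s$. First I would rewrite \eqref{LSIequ} purely in terms of the volume ratio: since $\kappa(p,s)=V(p,s)/s^n$ and $\kappa(p,s/2)=2^nV(p,s/2)/s^n$, one has $V(p,s)/V(p,s/2)=2^n\kappa(p,s)/\kappa(p,s/2)$ and $\ln(V(p,s)/s^n)=\ln\kappa(p,s)$, so \eqref{LSIequ} at radius $s$ becomes
\[
\mu+n+\tfrac n2\ln(4\pi)\le 16\cdot 2^n\,\frac{\kappa(p,s)}{\kappa(p,s/2)}+\frac{s^2}{V(p,s/2)}\int_{B(p,s)}\mathrm{R}\,dv+\ln\kappa(p,s).
\]

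The key is a dyadic crossing selection of $s$, and I expect this to be the main obstacle because $\kappa(p,\cdot)$ need not be monotone. Since $(M,g)$ is smooth, $\kappa(p,s)\to w_n$ as $s\to0$, while by assumption $\kappa(p,r)\le\delta$; because $\delta\le w_n$ (and $\delta<w_n$ in the relevant case $\delta=(4\pi)^{n/2}e^{\mu+n-2^n\cdot17}$), along the radii $r,r/2,r/4,\dots$ the quantity $\kappa(p,\cdot)$ passes from a value $\le\delta$ at $r$ to values $>\delta$ for small radii. Taking the first such crossing produces a radius $s_0\in(0,r]$ with
\[
\kappa(p,s_0)\le\delta<\kappa(p,s_0/2).
\]
It is exactly the limiting value $w_n$ at scale $0$ that guarantees the crossing in spite of nonmonotonicity; the degenerate case $\delta=w_n$ would need a separate adjustment.

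With $s_0$ fixed I would bound the three terms. The doubling term obeys $16\cdot2^n\,\kappa(p,s_0)/\kappa(p,s_0/2)<16\cdot2^n$ since $\kappa(p,s_0)\le\delta<\kappa(p,s_0/2)$. For the curvature term, the hypothesis $M\mathrm{R}(p,r)\le\delta$ applied at the admissible radius $s_0\le r$ gives $\int_{B(p,s_0)}\mathrm{R}\,dv\le\delta^{2/(n-1)}s_0^{2/(n-1)}V(p,s_0)^{(n-3)/(n-1)}$; inserting $V(p,s_0)=\kappa(p,s_0)s_0^n$ and $V(p,s_0/2)=\kappa(p,s_0/2)s_0^n/2^n$, the powers of $s_0$ cancel exactly (the bookkeeping exponent is $2-n+\tfrac{2+n(n-3)}{n-1}=0$), leaving
\[
\frac{s_0^2}{V(p,s_0/2)}\int_{B(p,s_0)}\mathrm{R}\,dv\le 2^n\,\delta^{2/(n-1)}\frac{\kappa(p,s_0)^{(n-3)/(n-1)}}{\kappa(p,s_0/2)}<2^n\Big(\frac{\kappa(p,s_0)}{\delta}\Big)^{(n-3)/(n-1)}\le 2^n,
\]
where I used $\kappa(p,s_0/2)>\delta$, the exponent identity $\tfrac2{n-1}+\tfrac{n-3}{n-1}=1$, and $\kappa(p,s_0)\le\delta$. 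Finally $\ln\kappa(p,s_0)\le\ln\delta$.

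Adding the three bounds, the displayed form of \eqref{LSIequ} forces $\mu+n+\tfrac n2\ln(4\pi)<17\cdot2^n+\ln\delta$, that is $\mu+n+\tfrac n2\ln(4\pi)-\ln\delta<2^n\cdot17$. On the other hand the choice $\delta\le(4\pi)^{n/2}e^{\mu+n-2^n\cdot17}$ gives $\ln\delta\le\tfrac n2\ln(4\pi)+\mu+n-2^n\cdot17$, hence $\mu+n+\tfrac n2\ln(4\pi)-\ln\delta\ge2^n\cdot17$, a contradiction. This rules out both alternatives failing at once and proves the theorem with the stated $\delta$. The only remaining care is in the edge case $\delta=w_n$, where the crossing step must be modified, and in verifying the exponent cancellations recorded above.
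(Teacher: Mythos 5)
Your proposal is correct, and it proves the theorem by a genuinely streamlined variant of the paper's argument. The paper does not select a single critical scale; instead it proves a downward-propagation claim --- if $M\mathrm{R}(p,r)\le\delta$ and $\kappa(p,s)\le\delta$ for some $s\in(0,r]$, then $\kappa(p,s/2)\le\delta$ --- iterates it along all dyadic radii, and contradicts $\kappa(p,r/2^m)\to w_n$. Because in that framework nothing is known about $\kappa(p,s/2)$ a priori, the paper must split the claim into two cases according to whether $V(p,s/2)$ is below or above the threshold $\delta^{\frac{2}{n-1}}2^{-n}s^{\frac{2n}{n-1}}[V(p,s)]^{\frac{n-3}{n-1}}$: in the first case $\kappa(p,s/2)\le\delta$ follows by direct bookkeeping, and only in the second does it invoke \eqref{LSIequ} to force the doubling bound $V(p,s)/V(p,s/2)\ge 2^n$. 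Your first-crossing radius $s_0$, with $\kappa(p,s_0)\le\delta<\kappa(p,s_0/2)$, hands you the lower bound on $\kappa(p,s_0/2)$ for free, which simultaneously controls the gradient term, the curvature term, and the entropy term of \eqref{LSIequ} in one pass --- effectively merging the paper's two cases into a single estimate at one scale (your exponent cancellations and the constant $17\cdot 2^n$ are exactly the paper's). The two arguments are contrapositives of one another and yield the identical constant $\delta$; yours is shorter, while the paper's iterative claim is what makes the $\epsilon$-regularization transparent. On that point, you correctly flagged the degenerate case $\delta=w_n$, where the crossing may never occur (e.g.\ if $\kappa(p,\cdot)\le w_n$ at all scales): the paper's remedy is precisely to run the whole argument with $\delta_\epsilon=\min\{(1-\epsilon)w_n,\,(4\pi)^{\frac n2}e^{\mu+n-2^n\cdot 17}\}$, for which your crossing radius always exists since $\kappa(p,s)\to w_n>\delta_\epsilon$, and then let $\epsilon\to 0^+$; you should adopt this explicitly (note that the limit technically yields the alternatives with non-strict inequalities, a cosmetic issue already present in the paper's own ``letting $\epsilon\to 0+$'' step). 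Finally, your use of $(\kappa(p,s_0)/\delta)^{\frac{n-3}{n-1}}\le 1$ implicitly requires $n\ge 3$, but the paper's Case one computation requires the same, and the main theorem assumes $n\ge3$, so this is not a defect relative to the paper.
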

\begin{proof}[Proof of Theorem \ref{maxirati}]
Suppose that there exist a point $p\in(M,g, f)$ and $r>0$ such that $M \mathrm{R}(p,r)\leq\delta$
for some constant $\delta>0$. For any $0<\epsilon<1$, constant $\delta$ is defined by
\[
\delta:=\min\left\{(1-\epsilon)w_n,\,(4\pi)^{\frac n2}e^{\mu+n-2^n\cdot17}\right\},
\]
where $w_n$ is the volume of the unit $n$-dimensional ball in $\mathbb{R}^n$.
In the following we will show that $\kappa(p,r)>\delta$. If this
conclusion is not true, then we make the following

\vspace{.1in}

\textbf{Claim}. \emph{If there exist a point $p\in M$ and $r>0$ such that
$M \mathrm{R}(p,r)\leq\delta$ for some constant $\delta>0$, then for any $s\in(0,r]$,
$\kappa(p,s)\leq\delta$ implies $\kappa(p,s/2)\leq\delta$.}

\vspace{.1in}

This claim will be proved later. We now continue to prove Theorem \ref{maxirati}. We can use
the claim repeatedly and finally get that for any $m\in\mathbb{N}$,
\[
\kappa\left(p,\frac{r}{2^m}\right)\leq\delta\le (1-\epsilon)w_n,
\]
where $\epsilon$ is the sufficiently small positive constant.
But if we let $m\to\infty$, then
\[
\kappa\left(p,\frac{r}{2^m}\right)\to w_n,
\]
which contradicts the preceding inequality. So $\kappa(p,r)>\delta$ and the theorem follows.
The desired constant $\delta$ is obtained by letting $\epsilon\to 0+$.
\end{proof}

In the rest, we only need to check the above claim.

\begin{proof}[Proof of Claim]
We prove the claim by two cases according to the relative sizes of $V(p,s/2)$ and $V(p,s)$.

\emph{Case one}. Suppose that
\[
V\left(p,\frac s2\right)\leq\delta^{\frac{2}{n-1}}2^{-n}s^{\frac{2n}{n-1}}\left[V(p,s)\right]^{\frac{n-3}{n-1}}.
\]
Then,
\begin{align*}
\kappa\left(p,\frac s2\right)&:=\frac{2^n}{s^n}V\left(p,\frac s2\right)\\
&\le\delta^{\frac{2}{n-1}}s^{\frac{2n}{n-1}-n}\left[V(p,s)\right]^{\frac{n-3}{n-1}}\\
&=\delta^{\frac{2}{n-1}}(\kappa(p,s))^{\frac{n-3}{n-1}}\\
&\leq\delta^{\frac{2}{n-1}}\delta^{\frac{n-3}{n-1}}\\
&=\delta,
\end{align*}
which gives the claim.

\emph{Case Two}. Suppose that
\[
V\left(p,\frac s2\right)>\delta^{\frac{2}{n-1}}2^{-n}s^{\frac{2n}{n-1}}\left[V(p,s)\right]^{\frac{n-3}{n-1}}.
\]
Since $M \mathrm{R}(p,r)\le\delta$, by the definition of $M \mathrm{R}(p,r)$
and the scalar curvature $\mathrm{R}>0$, we get
\[
\int_{B(p,s)}\mathrm{R}dv\le\delta^{\frac{2}{n-1}}s^{\frac{2}{n-1}}\left[V(p,s)\right]^{\frac{n-3}{n-1}}
\]
for all $s\in(0,r]$. Using the assumption of Case Two, we further get
\[
\int_{B(p,s)}\mathrm{R}dv\le2^n s^{-2}V\left(p,\frac s2\right)
\]
for all $s\in(0,r]$. Substituting this into \eqref{LSIequ} and using $\kappa(p,s)\leq\delta$,
we have
\begin{align*}
\mu+n+\frac n2\ln(4\pi)&\le 16\frac{V(p,s)}{V\left(p,\frac s2\right)}
+\frac{s^2}{V\left(p,\frac s2\right)}\int_{B(p,s)}\mathrm{R}dv
+\ln \kappa(p,s)\\
&\le 16\frac{V(p,s)}{V\left(p,\frac s2\right)}+2^n+\ln\delta
\end{align*}
for all $s\in(0,r]$. By the definition of $\delta$, we notice that
\[
\ln \delta\leq\mu+n+\frac n2\ln(4\pi)-2^n\cdot17.
\]
Substituting this into the above inequality yields
\[
\frac{V(p,s)}{V\left(p,\frac s2\right)}\ge 2^n
\]
for all $s\in(0,r]$. Therefore,
\begin{align*}
\kappa\left(p,\frac s2\right)&:=\frac{2^n\cdot V\left(p,\frac s2\right)}{s^n}\\
&\leq\frac{V(p,s)}{s^n}\\
&=\kappa(p,s)\\
&\leq\delta
\end{align*}
for any $s\in(0,r]$. This completes the proof of the claim.
\end{proof}


\section{Diameter control}\label{sec4}

In this section, we will apply Theorem \ref{maxirati} to finish the proof of
Theorem \ref{Main}. The proof uses Topping's argument in \cite{[To2]},
however more delicate analysis is required to get accurate coefficient dependence
on the dimension of manifold and the Perelman's entropy functional.

\begin{proof}[Proof of Theorem \ref{Main}]
We choose $r_0>0$ sufficiently large so that the total volume of the compact
shrinking soliton is less than $\delta r_0^n$. This choice can be achieved
because the soliton is compact. Here $\delta$ is defined as in
Theorem \ref{maxirati}. Hence for any point $p\in M$, we have
\[
\kappa(p,r_0)=\frac{V(p,r_0)}{r_0^n}\leq\frac{V(M)}{r_0^n}\leq\delta,
\]
where $V(M)$ denotes the volume of $M$. By Theorem \ref{maxirati}, we conclude
that $M \mathrm{R}(p,r_0)>\delta$. By the definition of
$M \mathrm{R}(p,r_0)$, there exists $s=s(p)>0$ such that
\begin{equation}\label{defMR}
\delta<s^{-1}\big[V(p,s)\big]^{-\frac{n-3}{2}}\left(\int_{B(p,s)} \mathrm{R}dv\right)^{\frac{n-1}{2}}.
\end{equation}
By the H\"older inequality
\[
\int_{B(p,s)} \mathrm{R}dv\le\left(\int_{B(p,s)} \mathrm{R}^{\frac{n-1}{2}}dv\right)^{\frac{2}{n-1}}\cdot\left(\int_{B(p,s)}dv\right)^{\frac{n-3}{n-1}},
\]
estimate \eqref{defMR} can be reduced to
\[
\delta< s^{-1}\int_{B(p,s)} \mathrm{R}^{\frac{n-1}{2}}dv.
\]
Therefore,
\begin{equation}\label{intine}
s(p)<\delta^{-1}\int_{B(p,s(p))} \mathrm{R}^{\frac{n-1}{2}}dv.
\end{equation}

Now we pick appropriate points $p$ at which to apply the inequality \eqref{intine}.
Since $M$ is compact, we can choose $p_1, p_2\in M$ are two extremal points in $M$
such that $\mathrm{diam}(M)=dist(p_1,p_2)$. Let $\Sigma$ be a shortest geodesic
connecting $p_1$ and $p_2$. Obviously, $\Sigma$ is covered by the geodesic balls
$\{B(p,s(p))~|~p\in \Sigma\}$. By a modification of the Vitali-type covering lemma
(see Lemma 5.2 in \cite{[To2]}, or \cite{[WZ]}), there exists a countable (possibly finite)
set of points $\{p_i\in\Sigma\}$ such that the geodesic balls $\{B(p_i,s(p_i))\}$ are
disjoint, and cover at least a fraction $\rho$, where $\rho\in(0,\frac 12)$ of $\Sigma$:
\[
\rho\,\mathrm{diam}(M)\leq \sum_i 2s(p_i).
\]
Substituting \eqref{intine} into the above inequality,
\begin{equation}
\begin{aligned}\label{precisein}
\mathrm{diam}(M)&\leq\frac{2}{\rho}\sum_is(p_i)\\
&<\frac{2}{\rho}\delta^{-1}\sum_i\int_{B(p_i,s(p_i))}\mathrm{R}^{\frac{n-1}{2}}dv\\
&\leq \frac{2}{\rho}\delta^{-1}\int_M\mathrm{R}^{\frac{n-1}{2}}dv,
\end{aligned}
\end{equation}
where $\delta>0$ is a constant, depending on $n$ and $\mu$. Letting
$\rho\nearrow\frac{1}{2}$,
\[
\mathrm{diam}(M)\le4\delta^{-1}\int_M\mathrm{R}^{\frac{n-1}{2}}dv,
\]
where $\delta$ is defined in Theorem \ref{maxirati}. This proves the desired estimate.
\end{proof}

\end{document}